\def\la{\langle}\def\ra{\rangle}
\def\pfend{\hfill{$\Box$}}
\newcommand{\GL}{{\operatorname{GL}}}
\newcommand{\SL}{{\operatorname{SL}}}
\newcommand{\PSL}{{\operatorname{PSL}}}
\def\irr#1{{\rm Irr}(#1)}
\def\cent#1#2{{\bf C}_{#1}(#2)}
\def\z#1{{\bf Z}(#1)}
\def\M#1{\mathrm{M}(#1)}
\def\o#1{\overline{#1}}
\def\cod#1{\operatorname{cod}(#1)}
\def\cd#1{\operatorname{cd}(#1)}
\newtheorem{lem}{ \bf Lemma}[section]
\newtheorem{prop}[lem]{\bf Proposition}
\newtheorem{thm}[lem]{\bf Theorem}
\newtheorem*{thm*}{\bf Theorem}
\newtheorem*{con*}{\bf Conjecture}
\newtheorem*{thmA}{\bf Theorem~A}
\newtheorem*{corB}{\bf Corollary~B}
\title{Nonsolvable groups with three nonlinear irreducible character codegrees  
\thanks{{\bf Acknowledgement:} The first author gratefully acknowledge the support of the NSF of China (No. 11971391, 12071376). The authors also would like to thank Prof. Guohua Qian for many useful conversations on this topic.}}
 \author{Dongfang Yang\footnote{Email: dfyang1228@163.com}\qquad Yu Zeng\footnote{Corresponding author. Email: yuzeng2004@163.com}\\
  {\footnotesize\small  Dept. Mathematics, Changshu Institute of
  Technology, Changshu, Jiangsu, 215500, China}}
\date{}
\begin{document}
\maketitle

\vskip 1cm

\begin{center}\textbf{Abstract}\end{center}
For an irreducible character $\chi$ of a finite group $G$, the \emph{codegree} of $\chi$ is defined as $|G:\ker(\chi)|/\chi(1)$. 
In this paper, we determine finite nonsolvable groups with exactly three nonlinear irreducible character codegrees, and they are $\mathrm{L}_2(2^f)$ for $f\ge 2$, $\mathrm{PGL}_2(q)$ for odd $q\ge 5$ or $\mathrm{M}_{10}$.

\vskip 5cm

\bigskip

\textbf{Keywords}\,\, character codegree, nonsolvable group.

\textbf{2020 MR Subject Classification}\,\, 20C15, 20D25.
\pagebreak

\section{Introduction}


In the last decade, there was considerable interest in studying the character codegrees of finite groups.
For instance, there are interesting connections among character codegrees and element
orders (see \cite{isaacs2011,qian2021}).

Recall that if $G$ is a finite group and $\chi$ is an irreducible complex character of $G$, the \emph{codegree} of $\chi$ is defined (in \cite{qianwangwei2007}) to be
$${\rm cod}(\chi) =\frac{|G: \ker(\chi)|}{\chi(1)},$$
and we write $\cod{G}=\{ \cod{\chi} : \chi \in \irr{G} \}$ to denote the \emph{set of irreducible complex character codegrees} of $G$.
As usual, we use the standard notation $\cd{G}$ to denote the \emph{set of irreducible complex character degrees} of $G$.
Similar to the study on finite groups with few character degrees, 
the study on finite groups with few character codegrees appears extensively in the literature.
It has taken its first step by Alizadeh and his collaborators (see, for instance, \cite{alizadeh2019}): they gave a characterization of finite groups $G$ with $|\cod{G}|\le 3$.
Liu and Yang classified nonsolvable groups with exactly four (resp. five) character codegrees in \cite{liuyang2021four} (resp. \cite{liuyang2021five}).


Let $G$ be a finite group.
Given that $N \unlhd G$, we write
\[
	{\rm Irr}(G|N)=\{ \chi\in \irr{G}: \ker(\chi)\nleq  N\}.
\]
We also write $\cd{G|N}$ and $\cod{G|N}$ to denote the sets of degrees and codegrees,
respectively, of the characters in $\irr{G|N}$.
In particular, ${\rm Irr}(G|G')$ is exactly the set of nonlinear
 irreducible characters of $G$. 
 For a group $G$ with $|{\rm cd}(G|G')|=1$ (i.e. all nonlinear
 irreducible characters of $G$ have the same degree), Isaacs
 \cite[Corollary 12.6]{isaacs1994} showed that the derived subgroup $G'$
 is abelian.
 Analogously, Qian and the second author classified the finite nonnilpotent groups $G$ with $|{\rm cod}(G|G')|=1$ in \cite{qianzeng2022}.
 It is easy to show by Theorem \ref{cd<=2} that finite groups $G$ with $|\cod{G|G'}|\le 2$ are solvable.
 So, in this paper, we determine nonsolvable groups such that $|\cod{G|G'}|=3$.

\begin{thmA}
   Let $G$ be a finite nonsolvable group such that $|\cod{G|G'}|=3$.
   Then $G$ is either $\mathrm{L}_2(2^f)$ where $f\ge 2$, $\mathrm{PGL}_2(q)$ where $q=p^f$ is an odd prime power larger than $3$, or $\mathrm{M}_{10}$.
\end{thmA}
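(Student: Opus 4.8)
The plan is to reduce, by induction on $|G|$, to the case that $G$ is almost simple, and then to use the classification of finite simple groups to pin down its socle. Two facts about codegrees are used throughout. First, if $N\trianglelefteq G$, $\theta\in\irr N$ and $\chi\in\irr G$ lies over $\theta$, then $\cod{\theta}$ divides $\cod{\chi}$; and the codegree is multiplicative with respect to direct products. Second, for any $N\trianglelefteq G$ one has $\cod{(G/N)|(G/N)'}\subseteq\cod{G|G'}$, since a nonlinear character of $G/N$ is exactly a nonlinear character of $G$ trivial on $N$ and its codegree is unchanged in the quotient. Combined with the remark following Theorem~\ref{cd<=2} (that $|\cod{H|H'}|\le2$ forces $H$ solvable), the second fact shows that any nonsolvable quotient of $G$ again has exactly three nonlinear codegrees; so in a minimal counterexample $G$, every proper nonsolvable quotient is already one of the three asserted groups.

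\emph{Step 1: the solvable radical is trivial.} Let $R$ be the solvable radical of a minimal counterexample $G$ and suppose $R\neq1$. Choose a minimal normal subgroup $V\le R$ of $G$: it is an elementary abelian $p$-group, $G/V$ is nonsolvable, and by induction $G/V$ is $\mathrm{L}_2(2^f)$, $\mathrm{PGL}_2(q)$ or $\mathrm{M}_{10}$, so inflation already yields the three nonlinear codegrees $c_1<c_2<c_3$ of $G/V$. Since $\bigcap_{\chi\in\irr G}\ker\chi=1$, some $\chi\in\irr G$ is nontrivial on $V$, whence $p\mid\cod{\chi}$. One then inspects the (almost simple) action of $G/V$ on $V$: in the central case $V=\z G$ and $G$ is a central extension of $G/V$, e.g.\ $\mathrm{SL}_2(q)$ over $\mathrm{L}_2(q)$ or a cover of $\mathrm{A}_6$; in the noncentral case $V$ is a nontrivial module. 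In each case, using the known character degrees of $G/V$, one checks that $\cod{\chi}\notin\{c_1,c_2,c_3\}$, so $|\cod{G|G'}|\ge4$ — a contradiction. \textbf{This is the main obstacle}: the verification must be carried out uniformly over the three possibilities for $G/V$ and over both split and non-split extensions, and this is where most of the casework lies. Hence $R=1$, and then $\soc G$ is a direct product of nonabelian simple groups.

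\emph{Step 2: the socle is simple.} Put $N=\soc G$ and suppose $N$ is not simple, say $N\cong T^k$ with a nonabelian simple $T$ and $k\ge2$ (this also subsumes the case of several minimal normal subgroups). Since $|\cod{T|T'}|\ge3$ by the remark quoted above, take characters of $G$ lying over characters of $N$ that are nontrivial on exactly one, or exactly two, of the simple direct factors of $N$; the multiplicativity of codegrees together with $\cod{\theta}\mid\cod{\chi}$ forces at least four distinct values in $\cod{G|G'}$, a contradiction. Therefore $N=T$ is nonabelian simple, it is the unique minimal normal subgroup of $G$, $\cent G N=1$, and $T\le G\le\Aut(T)$.

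\emph{Step 3: identification and conclusion.} For each nonlinear $\psi\in\irr T$ pick $\chi\in\irr G$ over $\psi$; then $\cod{\psi}$ divides $\cod{\chi}\in\cod{G|G'}$, which severely restricts the ordinary character degrees of $T$. Running through the alternating groups, the sporadic groups and the groups of Lie type (via their known character degrees) excludes every possibility except $T\cong\mathrm{L}_2(q)$, using the identifications $\mathrm{L}_2(4)\cong\mathrm{L}_2(5)\cong\mathrm{A}_5$ and $\mathrm{L}_2(9)\cong\mathrm{A}_6$. Finally, for $\mathrm{L}_2(q)\le G\le\mathrm{P\Gamma L}_2(q)$ one computes with the generic character table of $\mathrm{L}_2(q)$ and the tables of its automorphic extensions: the nonlinear codegrees of $\mathrm{L}_2(2^f)$ are $2^f(2^f-1),\ 2^f(2^f+1),\ 2^{2f}-1$; those of $\mathrm{PGL}_2(q)$ with $q=p^f$ odd and $q>3$ are $q(q-1),\ q(q+1),\ q^2-1$; and those of $\mathrm{M}_{10}$ are $45,72,80$ — whereas $\mathrm{L}_2(q)$ for odd $q>5$, every extension of $\mathrm{L}_2(2^f)$ by a nontrivial group of field automorphisms apart from $\mathrm{L}_2(4).2\cong\mathrm{PGL}_2(5)$, and the extensions $S_6$ and $\mathrm{P\Gamma L}_2(9)$ of $\mathrm{L}_2(9)$, all have at least four nonlinear codegrees. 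This isolates exactly the three families of Theorem~A.
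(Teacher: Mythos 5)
Your overall architecture (minimal counterexample; every proper nonsolvable quotient has exactly three nonlinear codegrees and hence, by induction, is one of the target groups; then eliminate a nontrivial normal subgroup sitting below such a quotient) is essentially the paper's. However, the two places where you write ``one checks'' are precisely where all of the work lives, and as described your Step 1 would not go through in the hardest case. When $V$ is a minimal normal elementary abelian $\ell$-subgroup with $\cent{G}{V}=V$, there is no obvious ``fourth codegree'' to exhibit by inspecting the character degrees of $G/V$: the paper's Proposition \ref{pro} (Step 4) instead assumes $\cod{G|G'}=\cod{G/V|(G/V)'}$, deduces that every $\chi\in\irr{G|V}$ is faithful with $\ell^{n}\mid\chi(1)$, obtains $\ell^{2n}\mid |G|-|G/V|$, and then needs a fixed-point-free action of a Frobenius group of order $q(q-1)/(2,q-1)$ together with \cite[Theorem 15.16]{isaacs1994} and Zsigmondy primes to reach a contradiction; the central case separately requires the Schur multiplier of $\mathrm{L}_2(q)$ and White's degree lists \cite{white2013} for extensions of $\SL_2(q)$. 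None of this is routine bookkeeping, and you have not supplied it.

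Your Step 3 proposes to rerun a CFSG-sized case analysis over all simple groups, whereas the paper sidesteps this entirely: in the base case the unique minimal normal subgroup $K$ is nonabelian, so every $\chi\in\irr{G|K}$ is faithful and $|\cd{G|K}|=|\cod{G|K}|\le 3$; Theorem \ref{cd<=2} forces equality with $3$, and the He--Qian classification \cite{heqian2012} of nonsolvable normal subgroups with exactly three induced character degrees then identifies $G$. Without that citation (or actually carrying out the sweep), your Step 3 is an unproved claim of comparable size to the whole theorem. Your Step 2 is also not airtight: the divisibility $\cod{\theta}\mid\cod{\chi}$ for characters lying over the various factors of $T^{k}$ does not by itself produce four \emph{distinct} elements of $\cod{G|G'}$, since the codegrees of characters over different $\theta$ may coincide; one must exhibit four genuinely different values. (This case is subsumed by the He--Qian reduction in the paper, so it costs nothing there.) On the positive side, your closing numerical data --- the codegree triples for $\mathrm{L}_2(2^{f})$, $\PGL_2(q)$ and $\mathrm{M}_{10}$ --- are correct.
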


Obviously, $|{\rm cd}(G)|-|{\rm cd}(G|G')|=1$. However, $|{\rm cod}(G)|-|{\rm cod}(G|G')|$ is quite different:
 it can be arbitrarily large;
 it equals 1 only for perfect groups;
 it has lower bound $2$ for non-perfect groups (see Lemma \ref{p203}).
So, the next result (\cite[Theorem]{liuyang2021four}) is a direct consequence of Theorem A.

\begin{corB}
	If $G$ is a finite nonsolvable group such that $|\cod{G}|=4$,
	then $G$ is isomorphic to $\mathrm{L}_2(2^f)$ for $f\ge 2$.
\end{corB}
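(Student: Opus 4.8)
The plan is to deduce Corollary~B from Theorem~A together with the dichotomy recorded in Lemma \ref{p203}. The starting observation is that every nonlinear irreducible character has codegree at least $2$ (indeed $\cod{\chi}\ge\chi(1)\ge 2$, since $\chi(1)^2\le |G:\ker(\chi)|$), while the principal character contributes the codegree $1$. Hence $\cod{G}$ is the union of $\cod{G|G'}$ with the codegrees of the linear characters of $G$, and the value $1$ lies outside $\cod{G|G'}$. Since $G$ is assumed nonsolvable, the remark preceding Theorem~A (which follows from Theorem \ref{cd<=2}) forces $|\cod{G|G'}|\ge 3$. So, with the hypothesis $|\cod{G}|=4$ in hand, everything reduces to controlling the gap $|\cod{G}|-|\cod{G|G'}|$.

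Next I would split into two cases according to whether $G$ is perfect. If $G$ is non-perfect, Lemma \ref{p203} gives $|\cod{G}|-|\cod{G|G'}|\ge 2$, whence $|\cod{G|G'}|\le 2$; but this contradicts the bound $|\cod{G|G'}|\ge 3$ obtained above. Therefore $G$ must be perfect. For perfect $G$ one has $G=G'$, so the only irreducible character lying outside $\irr{G|G'}$ is the trivial one, and its codegree $1$ does not occur among the nonlinear codegrees; consequently $|\cod{G}|=|\cod{G|G'}|+1$, i.e. the gap equals exactly $1$ (as also asserted by Lemma \ref{p203}). Combined with $|\cod{G}|=4$, this yields $|\cod{G|G'}|=3$.

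At this point Theorem~A applies: $G$ is isomorphic to $\mathrm{L}_2(2^f)$ with $f\ge 2$, to $\mathrm{PGL}_2(q)$ with $q$ an odd prime power greater than $3$, or to $\mathrm{M}_{10}$. The final step is to eliminate the latter two, and the decisive point is that they are not perfect: $\mathrm{PGL}_2(q)$ has derived subgroup $\mathrm{PSL}_2(q)$ of index $2$, and $\mathrm{M}_{10}$ has derived subgroup $\mathrm{A}_6$ of index $2$, whereas $\mathrm{L}_2(2^f)$ is simple and hence perfect. Since we have already shown that $G$ is perfect, only the family $\mathrm{L}_2(2^f)$ survives, giving $G\cong\mathrm{L}_2(2^f)$ as required.

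The argument is short precisely because Theorem~A carries all the structural weight; the only genuine content here is the bookkeeping linking $\cod{G}$ to $\cod{G|G'}$. Accordingly, the one place where care is needed — and the only spot where an error could creep in — is the perfect versus non-perfect dichotomy for the codegree gap: one must be sure that in the non-perfect case the nontrivial linear characters contribute at least two new codegree values beyond those already in $\cod{G|G'}$, which is exactly the content of Lemma \ref{p203}, and that in the perfect case the codegree $1$ is the unique extra value. Everything else is a direct citation of Theorem~A and of the elementary perfectness of the three listed groups.
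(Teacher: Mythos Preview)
Your proposal is correct and follows exactly the route the paper indicates: the paper does not give a separate proof of Corollary~B but simply notes that $|\cod{G}|-|\cod{G|G'}|$ equals $1$ precisely for perfect groups and is at least $2$ otherwise (Lemma~\ref{p203}), so that $|\cod{G}|=4$ forces $G$ perfect with $|\cod{G|G'}|=3$, and then Theorem~A together with the non-perfectness of $\mathrm{PGL}_2(q)$ and $\mathrm{M}_{10}$ finishes. Your write-up is a faithful expansion of this sketch, with no gaps.
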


In the following, all groups considered are finite and $p$ always denotes a prime.
We use standard notation in character theory, as in \cite{isaacs1994}.

\section{Proofs}



We first present a lemma revealing the cardinality difference of $\cod{G}$ and $\cod{G|G'}$.

\begin{lem}[\cite{qianzeng2022}]\label{p203} Let $G$ be a group such that $p$ is the smallest prime divisor of $|G/G'|$.
	Then $p\not\in {\rm cod}(G|G')$. 
	In particular, $|{\rm cod}(G)|-|{\rm cod}(G|G')|\geq 2$. 
	\end{lem}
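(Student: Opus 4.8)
The plan is to establish the first, substantive assertion by contradiction and then to read off the ``in particular'' statement from it. Suppose some nonlinear $\chi\in\irr{G}$ has $\cod{\chi}=p$, and pass to $\overline{G}=G/\ker(\chi)$, viewing $\chi$ as a faithful character of $\overline{G}$; since $\chi$ is faithful and nonlinear, $\overline{G}$ is nonabelian, and by hypothesis $|\overline{G}|=p\,\chi(1)$ with $\chi(1)\ge 2$. An order of this shape is very restrictive. From $\chi(1)^2\le|\overline{G}|=p\,\chi(1)$ we get $\chi(1)\le p$, and $\chi(1)=p$ is impossible since it would force $|\overline{G}|=p^2$ and hence $\overline{G}$ abelian; so $\chi(1)\le p-1$, $p\nmid\chi(1)$, and a Sylow $p$-subgroup $P$ of $\overline{G}$ has order $p$. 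The number $n_p$ of Sylow $p$-subgroups divides $|\overline{G}|/p=\chi(1)\le p-1$ and satisfies $n_p\equiv 1\pmod{p}$, so $n_p=1$ and $P\unlhd\overline{G}$.

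The second step is to extract a prime smaller than $p$ dividing $|G/G'|$. Since $\chi$ is faithful, we have $\z{\chi}=\z{\overline{G}}$, so the standard inequality $\chi(1)^2\le|\overline{G}:\z{\overline{G}}|$ gives $|\z{\overline{G}}|\le p/\chi(1)\le p/2<p$; hence $p\nmid|\z{\overline{G}}|$ and $P\nleq\z{\overline{G}}$. Therefore $C:=\cent{\overline{G}}{P}$ is a proper subgroup, and conjugation embeds $\overline{G}/C$ into $\Aut(P)\cong C_{p-1}$, so $\overline{G}/C$ is a nontrivial cyclic group whose order divides $p-1$. Being abelian, $\overline{G}/C$ is a quotient of $\overline{G}/\overline{G}'$, which in turn is a quotient of $G/G'$ (as $\overline{G}/\overline{G}'\cong G/(G'\ker(\chi))$); hence $|\overline{G}/C|$ divides $|G/G'|$. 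As $\overline{G}/C$ is nontrivial with order dividing $p-1$, it has a prime divisor $q\le p-1$, and $q$ divides $|G/G'|$, contradicting the minimality of $p$. This proves $p\notin\cod{G|G'}$.

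For the ``in particular'' clause, observe that $\cod{G|G'}\subseteq\cod{G}$, since every nonlinear irreducible character is an irreducible character. Now $1\in\cod{G}$, being $\cod{1_G}$, but $1\notin\cod{G|G'}$: indeed $\cod{\chi}=1$ forces $\chi(1)^2\le|G:\ker(\chi)|=\chi(1)$, so $\chi$ is linear. Also $p\in\cod{G}$, since $p\mid|G/G'|$ and so the abelian group $G/G'$ affords a linear character of order $p$, whose codegree is $p$; but $p\notin\cod{G|G'}$ by the first part. Therefore $|\cod{G}|\ge|\cod{G|G'}|+2$.

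The one delicate point is ruling out $P\le\z{\overline{G}}$ in the second step: if the Sylow $p$-subgroup were central, the conjugation action on it would be trivial and the argument would yield nothing. The inequality above, which uses faithfulness of $\chi$ to identify $\z{\chi}$ with $\z{\overline{G}}$, is what closes this gap; alternatively, a central Sylow subgroup would split off as a direct factor $\overline{G}=P\times K$ with $|K|=\chi(1)$, and then the corresponding $\psi\in\irr{K}$ would satisfy $\chi(1)^2=\psi(1)^2\le|K|=\chi(1)$, which is absurd.
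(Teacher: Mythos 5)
Your proof is correct. Note that the paper itself gives no argument for this lemma --- it is quoted from \cite{qianzeng2022} (a submitted paper) without proof --- so there is nothing in the text to compare against; your write-up supplies a complete, self-contained elementary justification. Every step checks out: from $\cod{\chi}=p$ you get $|\overline G|=p\chi(1)$, the bound $\chi(1)^2\le|\overline G|$ together with the nonabelianness of $\overline G$ forces $\chi(1)\le p-1$, Sylow counting gives a normal $P$ of order $p$, the inequality $\chi(1)^2\le|\overline G:\z{\overline G}|$ (valid because $\chi$ is faithful, so $\z{\chi}=\z{\overline G}$) rules out $P\le\z{\overline G}$, and the $N/C$-embedding of $\overline G/\cent{\overline G}{P}$ into $C_{p-1}$ produces a prime divisor of $|G/G'|$ smaller than $p$, a contradiction. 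The ``in particular'' clause follows exactly as you say, since $1$ and $p$ both lie in $\cod{G}\setminus\cod{G|G'}$. One small remark: when $p=2$ the argument already terminates at $\chi(1)\le p-1=1$, which contradicts $\chi(1)\ge 2$ directly; it would be cleaner to flag that case before invoking the later steps (e.g.\ ``a prime $q\le p-1$''), though logically nothing is amiss since a contradiction has been reached either way.
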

	


    The next proposition is the key to prove Theorem A.
	Let $G$ be a group and $H$ its subgroup, and let $\theta\in \irr{H}$.
	To state the proof of the next result, we shall use the notation $\irr{G|\theta}$ to denote the set of irreducible characters of $G$ lying over $\theta$; $\cod{G|\theta}$ to denote the set of codegrees of the characters in $\irr{G|\theta}$;
	$\mathrm{M}(G)$ to denote the Schur multiplier of a group $G$;
	$n_p$ to denote the largest power of $p$ that divides the positive integer $n$; 
	 $(C_p)^n$ to denote the elementary abelian $p$-group of order $p^n$ where $n$ is a positive integer.

\begin{prop}\label{pro}
	Let $N$ be a nontrivial normal subgroup of a group $G$.
	If $G/N$ is an almost simple group with socle $\mathrm{L}_{2}(q)$ where $q=p^f$ for some prime $p$, then $\cod{G/N|(G/N)'}\subsetneq \cod{G|G'}$.
\end{prop}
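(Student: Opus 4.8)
The plan is to show that every codegree arising from a nonlinear irreducible character of $G/N$ — viewed as a character of $G$ via inflation — is also a codegree of some nonlinear irreducible character of $G$, and then to exhibit at least one codegree in $\cod{G|G'}$ that does not come from $G/N$, forcing the containment to be strict. For the first part, note that if $\bar\chi \in \irr{G/N|(G/N)')$ and $\chi \in \irr{G}$ is its inflation, then $\chi$ is nonlinear with $\chi(1) = \bar\chi(1)$ and $\ker(\chi)/N = \ker(\bar\chi)$, so $\cod{\chi} = |G:\ker(\chi)|/\chi(1) = |(G/N):\ker(\bar\chi)|/\bar\chi(1) = \cod{\bar\chi}$. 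Hence $\cod{G/N|(G/N)'} \subseteq \cod{G|G'}$ immediately, and all the work is in the strictness.

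For strictness I would argue by contradiction: suppose $\cod{G|G'} = \cod{G/N|(G/N)'}$. Since $G/N$ is almost simple with socle $S := \mathrm{L}_2(q)$, its derived subgroup contains $S$, and one knows $\cod{G/N|(G/N)'}$ explicitly from the (well-documented) character tables of $\mathrm{L}_2(q)$, $\mathrm{PGL}_2(q)$, and $\mathrm{P\Gamma L}_2(q)$ — in particular $p \in \cod{G/N}$, coming from the Steinberg-type character of degree $q$ with trivial kernel, whose codegree is exactly $q = p^f$ (or a divisor thereof, but $p \mid \cod{\mathrm{St}}$ in any case). The key leverage is then to pick a minimal normal subgroup $M \le N$ of $G$ and analyze $\irr{G|M}$: any $\chi \in \irr{G|M}$ is nonlinear (as $M \le G'$, since $G/N$ nonsolvable forces $G' \not\le N$, and $M$ is minimal normal so either $M \le G'$ or $M \cap G' = 1$; in the latter case $G/C_G(M)$ embeds in automorphisms and one handles it separately). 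I would then compare $p$-parts of codegrees: the codegrees in $\irr{G|M}$ are constrained by $|M|$ and by the structure of $M$ as a chief factor. If $M$ is a $p'$-group, or a $p$-group, or nonabelian, each case gives a codegree whose $p$-part, or whose prime divisors, cannot match anything in the finite explicit list $\cod{G/N|(G/N)'}$ — this is where the hypothesis that the socle is specifically $\mathrm{L}_2(q)$ (so that $|\cod{G/N|(G/N)'}| \le 3$ or $4$ and the list is short and rigid) is used.

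The main obstacle I anticipate is the case where $N$ itself is small and "resonant" with the codegree set of $G/N$ — e.g. $N$ a central $p$-subgroup, or $N$ isomorphic to a piece realizing the Schur multiplier $\M{\mathrm{L}_2(q)}$ (which is $C_2$ generically, $C_2 \times C_2$ for $q=9$, and $C_6$, $C_{12}$ for $q=4,9$), so that the covering group $\mathrm{SL}_2(q)$ (or a further cover) sits inside $G$. There the faithful characters of $\mathrm{SL}_2(q)$ have degrees $(q\pm 1)$ and $(q\pm 1)/\gcd$, and their codegrees could a priori coincide numerically with codegrees already present for $\mathrm{L}_2(q)$; ruling this out requires a careful case check using the notation $n_p$, $(C_p)^n$, and $\M{\cdot}$ introduced just before the proposition, together with the observation that a faithful character of a nonsplit central extension contributes a codegree divisible by the full $2$-part (or relevant prime-part) of $|Z \cdot (\text{something})|$, which strictly enlarges the set. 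So the proof structure is: (i) reduce to $M$ minimal normal in $G$ with $M \le G'$; (ii) dispatch $M$ abelian $p'$, $M$ abelian $p$-group (central vs. non-central), and $M$ nonabelian via the explicit codegree lists; (iii) in the central-$p$ case, invoke the Schur multiplier classification for $\mathrm{L}_2(q)$ to finish. I would expect step (iii), the central extension bookkeeping, to be the longest and most delicate, and everything else to be comparatively short once the inflation observation is in hand.
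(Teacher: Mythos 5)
Your overall skeleton --- argue by contradiction, pass to a minimal normal subgroup, split according to its structure, and treat the central case via the Schur multiplier $\M{\mathrm{L}_2(q)}$ --- does match the paper's Steps 1--3, and the inflation observation giving $\cod{G/N|(G/N)'}\subseteq\cod{G|G'}$ is correct. But the decisive case, where the minimal normal subgroup $N$ is elementary abelian of order $\ell^n$ with $\cent{G}{N}=N$, is exactly where your proposal has no argument. The claim that such an $N$ ``gives a codegree whose prime divisors cannot match anything in the explicit list'' is not true as stated: the codegree list of $G/N$ coming from White's theorem contains entries divisible by every prime in $\pi(G/N)$, and $\ell$ may well be one of those primes, so no contradiction falls out of divisor bookkeeping alone. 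The paper's actual argument here is substantial: first, every $\chi\in\irr{G|N}$ is faithful with $\ell^n\mid\chi(1)$ (this uses that each $\cod{\chi}$ must lie in the list for $G/N$), whence $\ell^{2n}$ divides $\sum\chi(1)^2=|G|-|G/N|$ and one deduces $\ell^n<q+1$; second, since $\cent{G}{N}=N$ the nonsolvable group $G/N$ embeds in $\GL_n(\ell)$, and letting a Frobenius subgroup of order $q(q-1)/(2,q-1)$ of the socle act on $N$, \cite[Theorem 15.16]{isaacs1994} forces $n\ge(q-1)/(2,q-1)$ when $\ell\ne p$, contradicting $\ell^n<q+1$; finally, for $\ell=p$ one has $n\le f$, and Zsigmondy's theorem on primitive prime divisors of $p^f+1$ eliminates $f\ge 2$, leaving $G/N$ solvable. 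None of these three ingredients (the $\ell^{2n}$ counting bound, the coprime-action module estimate, Zsigmondy) appears in your proposal, and without them this case does not close; it, not the central-extension bookkeeping, is the longest and most delicate part.

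Two smaller problems. You lean on $|\cod{G/N|(G/N)'}|\le 3$ or $4$, but the proposition is stated (and, because of the inductive setup, must be proved) for an arbitrary almost simple group with socle $\mathrm{L}_2(q)$, where the list has size growing with $b=|G/N:K/N|$, so ``short and rigid'' is not available. And you do not address the possibility that $G$ has a second minimal normal subgroup isomorphic to $\mathrm{L}_2(q)$ itself (the paper's Step 1), which needs its own argument via the Steinberg character and extendibility; picking $M\le N$ minimal normal does not make this disappear, since the faithfulness of the characters in $\irr{G|M}$ --- which you need in order to compute their codegrees as $|G|/\chi(1)$ --- requires knowing $M$ is the unique minimal normal subgroup.
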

\begin{proof}
	Suppose false and let $G$ be a counterexample of minimal possible order.
    Then $N$ is minimal normal in $G$ and $\cod{G/N|(G/N)'}=\cod{G|G'}$.
	Notice that $G/N$ is an almost simple group with socle $K/N\cong\mathrm{L}_{2}(q)$,
	and hence an application of \cite[Theorem A]{white2013} yields
	\[
		\cd{G/N} \subseteq \{ 1,q,\frac{q+\varepsilon}{2} \} \cup \{ (q-1)j: j\mid b \}\cup \{ (q+1)j : j\mid b \}
	\]
	where $\varepsilon=(-1)^{(q-1)/2}$ and $b=|G/N:H/N|$.
	Furthermore,
	if $G/N$ contains the diagonal automorphism $\mathfrak{d}$ of order 2, then $H/N=K/N \rtimes \la \mathfrak{d}\ra$, $q$ is odd and 
	\begin{equation}\label{e_1}
		\cod{G/N|(G/N)'} \subseteq \{ b(q^{2}-1)  \}\cup \{ \frac{bq(q+1)}{j}: j\mid b \}\cup \{ \frac{bq(q-1)}{j} : j\mid b \}; 
	\end{equation}
	if $G/N$ does not contain the diagonal automorphism of order 2, then $H/N=K/N$ and
	\begin{equation}\label{e_2}
		\cod{G/N|(G/N)'} \subseteq \{ \frac{b(q^{2}-1)}{(2,q-1)}, \frac{2bq(q^2-1)}{(2,q-1)(q+\varepsilon )}  \}\cup \{ \frac{bq(q+1)}{(2,q-1)j}: j\mid b \}\cup \{ \frac{bq(q-1)}{(2,q-1)j} : j\mid b \}.
	\end{equation}
	Now, we will complete the proof by carrying out the following steps.

	\emph{Step 1. $N$ is the unique minimal normal subgroup of $G$.}
	
    Otherwise, $S$ is the other minimal normal subgroup of $G$ where $S\cong \mathrm{L}_2(q)$.
    Let $\varphi$ be the Steinberg character of $S$.
	
	Assume first that $N$ is solvable.
	If $G/S$ is nonabelian, then $G$ has a quotient group which is Frobenius. 
	So, $k\in \cod{G|G'}$ where $k\mid 2b$.
    However, since $|K/N|_r>|G/K|_r$ for every $r\in \pi(K/N)$ by \cite[Proposition 2.5]{qian2012 p-closed}, we conclude a contradiction $k \notin \cod{G/N|(G/N)'}$ by calculation.
	In fact, since $q$, $q+1$ and $(q-1)/(2,q-1)$ are Hall numbers of $|K/N|$ which are relatively coprime,
	$k\in \cod{G/N|(G/N)'}$ implies that there exists a prime $r\in \pi(K/N)$ such that $|K/N|_r\le |G/K|_r$. 
	
	Hence, $G/S$ is abelian, that is $N \cong C_{\ell}$ (for a prime $\ell$) is central in $G$.
    Let $\theta \in \irr{N|N}$ and let $\psi=\varphi \times \theta$,
	and observe that $\psi \in \irr{S\times N}$ is $G$-invariant.
	If $G/N$ does not contain the diagonal automorphism of order $2$, then $G/K$ is cyclic,
	and hence $\psi$ extends to a faithful $\chi \in \irr{G}$.
	So, 
	$$\cod{\chi}=\frac{|G|}{\chi(1)}=\frac{|G/N|}{\varphi(1) }\frac{|N|}{\theta(1)}=\frac{(q^2-1)b\ell}{(2,q-1)}.$$
	By checking (\ref{e_2}), $\cod{\chi}\notin \cod{G/N|(G/N)'}$ as $q$, $q+1$ and $(q-1)/(2,q-1)$ being relatively coprime.
    If $G/N$ contains the diagonal automorphism of order $2$, then $q$ is odd.
	Observe that $S$ has exactly two faithful irreducible characters $\alpha_1$ and $\alpha_2$ with degree $(q+\varepsilon)/2$ which are conjugate under $H$ (see, for instance, \cite[Lemma 4.5]{white2013}), and then $(\alpha_1\times \theta)^H=(\alpha_2\times \theta)^H \in \irr{H}$ has degree $q+\varepsilon$.
	Since $H\unlhd G$ such that $G/H$ is cyclic,  
    and hence $(\alpha_1\times \theta)^H$ extends to a faithful $\omega \in \irr{G}$.
	So, $\cod{\omega}=|G|/\omega(1)=b\ell q(q-\varepsilon)$ is not in $\cod{G/N|(G/N)'}$ by checking (\ref{e_1}), a contradiction.



	Hence, $N$ is nonsolvable. 
   Then by \cite[Theorem 2, Theorem 3, Theorem 4, Lemma 5]{bianchi2007} there exists an irreducible character $\theta$ of $N$ which is extendible to $G$.
   By the similar reasoning in the above paragraph, we deduce by calculation that $\cod{G|\psi}\cap \cod{G/N|(G/N)'}=\varnothing$, a contradiction.


	\emph{Step 2. $N$ is solvable.}

	Otherwise, $N$ is nonsolvable. By \cite[Theorem 2, Theorem 3, Theorem 4, Lemma 5]{bianchi2007} there exists a nonlinear irreducible character $\theta$ of $N$ which extends to $\chi \in \irr{G}$. 
	Observe that by Gallagher correspondence $\irr{G|\theta}=\{ \chi\beta: \beta\in \irr{G/N} \}$ and that $\ker(\chi\beta)=1$ for all $\beta \in \irr{G/N}$, one has $\cod{\chi\beta}=\frac{|G|}{\theta(1)\beta(1)}$.
    In particular, for each faithful $\alpha \in \irr{G/N}$, $\cod{\chi\alpha}=\cod{\alpha}\frac{|N|}{\theta(1)}$ is not in $\cod{G/N|(G/N)'}$, a contradiction.

    \emph{Step 3. $\cent{G}{N}=N$.}
	
	Otherwise, $K \le \cent{G}{N}$.
	By the uniqueness of $N$, $N\le \z K \cap K'$, and hence $N$ embeds into $\M{K/N}\cong \M{\mathrm{L}_2(q)}$.
	Since $N$ is an elementary abelian $\ell$-group, either $|N|=2$ or $q=9$ and $|N|=3$.
	Assume the latter. 
	Then $K\cong 3.A_6$ and $G/K$ is a subgroup of $(C_2)^2$.
    Let $\varphi \in \irr{K|N}$ be the character of degree $9$, and observe that $\det (\varphi)=1_K$ as $K$ being quasisimple.
    An application of \cite[Corollary 6.28]{isaacs1994} yields that $\varphi$ extends to $\chi \in \irr{G}$.	
    By calculation, we deduce a contradiction $\cod{\chi}=|G|/\chi(1) \notin \cod{G/N|(G/N)'}$.
	
	So, $K\cong \SL_2(q)$ where $q$ is odd and $|N|=2$.
	It can be checked that there exists a faithful $\alpha \in \irr{K}$ of degree $q+1$ which is $G$-invariant.
	If $G/N$ does not contain the diagonal automorphism of order 2, then $G/K$ is cyclic.
	Therefore, 
	$\alpha$ extends to $\chi\in \irr{G}$,
	and hence $\cod{\chi}=2q(q-1)b$ which is not in $\cod{G/N|(G/N)'}$, a contradiction.
    Suppose that $G/N$ contains the diagonal automorphism of order 2.
    Then it can be checked that there exists a faithful $\omega \in \irr{G}$ of degree $q-\varepsilon$.
    In fact, $K$ has exactly two faithful irreducible characters $\beta_1$ and $\beta_2$ with degree $(q-\varepsilon)/2$ which are conjugate under $H$;
	as $|H:K|=2$, $\beta_1^H \in \irr{H}$ has degree $q-\varepsilon$;
	note that $H\unlhd G$ such that $G/H$ is cyclic and that $\irr{H|\beta_1}=\irr{H|\beta_2}=\{ \beta_1^H \}$,  
    and hence $\beta_1^H$ is extendible to $G$.
	So, $\cod{\omega}$ is not in $\cod{G/N|(G/N)'}$, a contradiction.

	\emph{Step 4. Final contradiction.}

    Write $|N|=\ell^n$ for some prime $\ell$.
	For each $\chi \in \irr{G|N}$, $\ker(\chi)=1$ and $\ell^n \mid \chi(1)$.
	Since
	\[
		\ell^{2n}\mid\Sigma_{\chi\in \irr{G|N}} \chi(1)^2=|G|-|G/N|,
	\] 
	$\ell^{2n}\mid 2_\ell b_{\ell} |\mathrm{L}_2(q)|_{\ell}$,
	and hence by \cite[Proposition 2.5]{qian2012 p-closed} $\ell^{2n}\mid  (|\mathrm{L}_2(q)|_{\ell})^2$.
	So, $\ell^{2n} < (q+1)^2$, i.e. $\ell^{n}< q+1$.
    As $\cent{G}{N}=N\cong (C_{\ell})^n$, $G/N$ embeds into $\GL_{n}(\ell)$.
    Recall that $G/N$ is nonsolvable, and hence $\ell^{n}\geq 8$, i.e. $q>7$.
    Set $\o G=G/N$, let $\o F$ be a Frobenius subgroup of $\o K$ of order $q(q-1)/(2,q-1)$, and let $\o E$ be the Frobenius kernel of $\o F$.
	Set $N_0=\cent{N}{\o E}$, then $V=N/N_0$ is an $\o F$-module such that $\cent{V}{\o E}=1$.
    If $\ell \neq p$, i.e. $(|V|,|\o E|)=1$, then $n\ge \dim (V)\ge (q-1)/(2,q-1)$ by \cite[Theorem 15.16]{isaacs1994}.
	Therefore, 
	$$q+1> \ell^n\geq \ell^{(q-1)/(2,q-1)},$$
    a contradiction.
	In fact, $q+1\le 2^{(q-1)/2}$ where $q>7$.
    So, $\ell=p$ and $n\le f$.	
	Assume that $f\geq 2$.
	By Zsigmondy's theorem, there exists a prime divisor of $p^f+1$ which does not divide $|\GL_n(p)|$ except for the case $p^f=2^3$.
	If $p^f=2^3$, then $p^f+1 \nmid |\GL_n(p)|$ contradicting the fact that $G/N$ embeds into $\GL_n(p)$.
	So, $n=f=1$, and hence $G/N$ is solvable, the final contradiction.
\end{proof}

The following deep theorem \cite[Theorem B]{isaacsknutson1998} is another fact needed to prove Theorem A which is also the key to show that a group $G$ with $|\cod{G|G'}|\le 2$ is solvable.

\begin{thm}\label{cd<=2}
   Let $N$ be a normal subgroup of a group $G$ and suppose that $|\cd{G|N}|\leq 2$, then $G$ is solvable.
\end{thm}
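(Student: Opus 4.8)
This is \cite[Theorem B]{isaacsknutson1998}; here I outline the strategy one would follow. The plan is to induct on $|G|$, choosing a counterexample $(G,N)$ with $|G|$ minimal; thus $G$ is nonsolvable, $1\neq N\trianglelefteq G$, and $|\cd{G|N}|\leq 2$. First I would reduce to the case that $N$ is a minimal normal subgroup of $G$: if $M\trianglelefteq G$ and $1<M\leq N$, then $\irr{G|M}\subseteq\irr{G|N}$, so $|\cd{G|M}|\leq 2$, and replacing $N$ by such an $M$ of least order keeps $(G,N)$ a counterexample. Next, if $N$ is nonabelian I would show $\cent{G}{N}=1$: otherwise $C:=\cent{G}{N}>1$, and since $NC/C\cong N$ is nonsolvable while $\cd{G/C|NC/C}\subseteq\cd{G|N}$ still has at most two elements, $(G/C,NC/C)$ would be a strictly smaller counterexample. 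Hence $N=S_1\times\cdots\times S_k$ with the $S_i$ isomorphic to a fixed nonabelian simple group $S$, the group $G$ permutes the $S_i$ transitively, and $N\leq G\leq\Aut(N)=\Aut(S)\wr\mathrm{Sym}_k$; in particular $N=\soc{G}$ is the unique minimal normal subgroup of $G$.

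In this nonabelian case $\irr{G|N}$ is exactly the set of faithful irreducible characters of $G$, and I would show it has at least three distinct degrees. For $k\geq 2$ I would first reduce to $k=1$ by the Clifford theory of a transitive product-type group, which relates $\cd{G|N}$ to the corresponding degree set of the almost simple group $L/\cent{L}{S_1}$, where $L=\norm{G}{S_1}$ is the stabilizer of a component. The remaining, genuinely almost simple, case I would then settle using the classification of finite simple groups together with the known character--degree data: the base case $G=S$ rests on the fact that every nonabelian simple group has at least four irreducible character degrees, and the proper extensions $S<G\leq\Aut(S)$ are checked group by group, tracking how $\Aut(S)/S$ permutes $\irr{S}$.

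The remaining case, where $N$ is an elementary abelian $p$-group and $G/N$ is nonsolvable, is the delicate one and is where essentially all the work of \cite{isaacsknutson1998} lies. Here one cannot pass to a proper section while keeping control of $\cd{\cdot\,|\,\cdot}$, so instead I would analyse the action of $G$ on $N$ together with Clifford theory over $N$ --- character triples and projective representations of $G/N$ --- to transfer a sufficiently rich family of degrees of the nonsolvable quotient $G/N$ up into $\cd{G|N}$, the argument being organized around the single-degree situation $|\cd{G|N}|=1$ (which on its own already constrains the structure of $N$) applied to the sections obtained by intersecting the kernels of the characters of one of the two available degrees. I expect this abelian analysis, rather than the simple-group bookkeeping, to be the main obstacle: the nonabelian case reduces to a finite verification, whereas the abelian case needs the genuinely structural input that makes this a deep theorem.
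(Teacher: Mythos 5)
The paper does not actually prove this statement: it is quoted verbatim as \cite[Theorem B]{isaacsknutson1998}, and you correctly identify that citation. The problem is that your sketch then sets out to prove the statement as literally written, and that statement is false: the conclusion of Isaacs and Knutson's Theorem B is that \emph{$N$ is solvable}, not that $G$ is. A counterexample to the version you are attacking is $G=(C_2)^4\rtimes \mathrm{A}_5\cong \mathrm{ASL}_2(4)$ with $N=(C_2)^4$ the natural module. Here $N$ is the unique minimal normal subgroup and $\cent{G}{N}=N$, so every $\chi\in\irr{G|N}$ is faithful; since $\mathrm{A}_5\cong\SL_2(4)$ is transitive on the $15$ nontrivial linear characters of $N$ and the doubly transitive permutation character of $G$ on $16$ points has a faithful irreducible constituent of degree $15$, one checks (e.g.\ from $\sum_{\chi\in\irr{G|N}}\chi(1)^2=|G|-|G/N|=900=4\cdot 15^2$) that $\cd{G|N}=\{15\}$. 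Thus $|\cd{G|N}|=1\le 2$ while $G$ is nonsolvable. This sits exactly in the case you call ``the delicate one'' ($N$ elementary abelian, $G/N$ nonsolvable), so the plan of ``transferring a sufficiently rich family of degrees of $G/N$ up into $\cd{G|N}$'' cannot be carried out there: the desired conclusion simply fails in that case.

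Two further remarks. First, the misstatement originates in the paper, not with you, and it is harmless for the paper's applications: the theorem is only ever invoked with $N$ nonsolvable (a nonabelian minimal normal subgroup $K$, or $N=G'$ with $G$ nonsolvable), where the correct contrapositive ``$N$ nonsolvable $\Rightarrow |\cd{G|N}|\ge 3$'' is exactly what is needed; accordingly, the portion of your outline that reduces the nonabelian case to almost simple groups and CFSG bookkeeping is the part that is both sound in spirit and actually relevant. Second, even for the corrected statement your text is a strategy outline rather than a proof: the decisive steps are deferred (``I would analyse\dots'', ``I expect\dots''), so as a self-contained argument it establishes nothing beyond the citation, which is all the paper itself offers.
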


Now, we are ready to prove Theorem A.

\bigskip

\noindent \emph{Proof of Theorem A.}
    Assume false and let $G$ be a counterexample of minimal possible order.
	Let $N\unlhd G$ be maximal such that $G/N$ is nonsolvable.
	Then $G/N$ has the unique minimal normal subgroup, say $K/N$, which is nonabelian.
	
	Assume that $N=1$. 
	Recall that $K$ is the unique minimal normal subgroup of $G$ which is nonabelian.
	For a nonprincipal irreducible character $\theta$ of $K$ and an irreducible character $\chi$ of $G$ lying above $\theta$, $\ker(\chi)=1$ and hence $\cod{\chi}=|G|/\chi(1)$.
    So, $|\cd{G|K}|\le |\cod{G|K}|\le |\cod{G|G'}|\le 3$ where the second inequality holds because $K\le G'$.	 
    As $G$ is nonsolvable, it follows by Theorem \ref{cd<=2} that $|\cd{G|K}|=3$.
	Therefore, \cite[Theorem 1.1 and Theorem 1.2]{heqian2012} yields a contradiction.

	So, $N>1$, and write $\o G=G/N$. 
	By the discussion in the above paragraph, one see that $|\cod{\o G|{\o G}'}|=3$ as $\o G$ being nonsolvable.
	By the minimality of $G$, $\o G$ is either $\mathrm{L}_2(2^f)$ where $f\ge 2$, $\mathrm{PGL}_2(q)$ where $q=p^f$ is an odd prime power larger than 3, or $\mathrm{M}_{10}\cong \mathrm{L}_2(q) \rtimes C_2$. 
	So, we conclude from Proposition \ref{pro} that $|\cod{G|G'}|>|\cod{\o G|{\o G}'}|=3$, the final contradiction. \pfend
    
\bigskip

Following similar argument in the above proof, one could prove that a group $G$ with $|\cod{G|G'}|\le 2$ is solvable.  
It also explains why we study nonsolvable groups with exactly three nonlinear irreducible character codegrees.

\end{document}